\numberwithin{equation}{section}
\newtheoremstyle{note}
{1em}
{1em}
{}
{}
{\bfseries}
{:}
{.5em}
{}
\newtheorem{theorem}{Theorem}[section]
\newtheorem{proposition}[theorem]{Proposition}
\newtheorem{corollary}[theorem]{Corollary}
\theoremstyle{note}
\newtheorem{remark}[theorem]{Remark}
\newtheorem{definition}[theorem]{Definition}
\newtheorem{example}[theorem]{Example}
\newcommand{\N}{{\mathbb{N}}}
\newcommand{\R}{{\mathbb{R}}}
\newcommand{\C}{{\mathbb{C}}}
\DeclareMathOperator{\tr}{tr}
\DeclareMathOperator{\spa}{span}
\DeclareMathOperator{\rank}{rank}
\title{Connectivity for Quantum Graphs}
\author[J.A. Ch\'avez-Dom\'inguez]{Javier Alejandro Ch\'avez-Dom\'inguez}
\address{Department of Mathematics, University of Oklahoma, Norman, OK 73019-3103,
USA} \email{jachavezd@ou.edu}
\author{Andrew T. Swift}
\email{ats0@ou.edu}
\thanks{The first author was partially supported by NSF grant DMS-1900985.}
\subjclass[2010]{05C40 (primary), and 47L25, 81P45 (secondary)}
\begin{document}

\maketitle

\begin{abstract}
In quantum information theory there is a construction for quantum channels, appropriately called a quantum graph, that generalizes the confusability graph construction for classical channels in classical information theory.  In this paper, we provide a  definition of connectedness for quantum graphs that generalizes the classical definition. This is used to prove a quantum version of a particular case of the classical tree-packing theorem from graph theory. Generalizations for the related notions of $k$-connectedness and of orthogonal representation are also proposed for quantum graphs, and it is shown that orthogonal representations have the same implications for connectedness as they do in the classical case. 
\end{abstract}

\section{Introduction}
In classical zero-error information theory, one is interested in the accurate transmission and recovery of messages through a noisy channel.  Typically these messages are transmitted one letter of the alphabet at a time and properties of the transmission needed to ensure an accurate reading of the message (such as repetition of a sent letter) are determined from the noise of the channel.
To model this sort of scenario, we consider finite sets $V$ and $W$ that represent the input and output alphabets, respectively. A \emph{classical channel} consists of choosing for each input $v \in V$ 
a probability distribution over $W$, specifying how $v$ might be read after transmission through the channel; this represents the noise of the channel. 
The accuracy of a sent message boils down to how likely two different input letters might be transmitted and then received as the same output. 
Thus, a natural graph-theoretical construction that we can associate to a channel as above is the graph having elements in $V$ as vertices and where $u, v \in V$ are connected by an edge if there is positive probability that $u$ and $v$ could be transmitted and received as the same output.  This graph is called the \emph{confusability graph} of the channel, and it is not hard to see that every (finite) graph (with all possible loops) can be realized as the confusability graph of some channel.  In this way, there is a rich interplay between graph theory and information theory.

The purpose of this paper is to study the connectivity of the analogue of confusability graph that arises naturally from \emph{quantum} information theory (see \cite{Duan-Severini-Winter}).
To better motivate the definition of a quantum channel, observe first that a classical channel as described in the previous paragraph is canonically associated with a linear map $\R^V \to \R^W$:  For each $v \in V$, the vector having a $1$ in the $v$-th position and zeroes everywhere else gets mapped to the probability density associated to $v$, and this map is then extended linearly. Observe that this linear map is positive (that is, it sends nonnegative vectors to nonnegative vectors) and moreover it maps probability densities to probability densities.
In quantum information theory, the role of a probability density is played by a 
quantum state, 
that is, a positive semidefinite matrix with trace 1.
A \emph{quantum channel} is then represented by a linear map $\Phi\colon M_n \to M_m$ between spaces of matrices with complex entries, which is trace-preserving and \emph{completely positive}; the latter term means that not only is the map $\Phi$ positive (i.e. it maps positive semidefinite matrices to positive semidefinite matrices), but also the same is true whenever we take the tensor product of $\Phi$ with the identity mapping on $M_k$ for each $k\in\N$.
By Choi's theorem (\cite{Choi}), since $\Phi\colon M_n \to M_m$ is completely positive there exist matrices $K_1, K_2,\dots K_N \in M_{m,n}$ such that $\Phi(\rho)=\sum_{i=1}^N K_i\rho K_i^\dagger$ for all matrices $\rho\in M_n$. 
In the quantum setting, two transmitted states $\rho$ and $\psi$ are distinguishable from each other if their images are orthogonal, and this may be seen to be equivalent to the condition that $\rho A \psi=0$ for all $A\in  \mathrm{span}\{K_i^\dagger K_j\}_{1\leq i,j\leq N}$ \cite{Duan-Severini-Winter}.
For this reason, and by analogy to the classical setting, $\mathrm{span}\{K_i^\dagger K_j\}_{1\leq i,j\leq N}$ is called the \emph{quantum confusability graph} associated to $\Phi$. It is easy to see that a quantum confusability graph is an \emph{operator system}, that is, a linear space of matrices with complex entries which is closed under taking adjoints and contains the identity matrix (since $\Phi$ is trace-preserving, $\sum_{i=1}^N K_i^\dagger K_i=\mathrm{Id}$), and in fact every operator system can be realized as the quantum confusability graph of some quantum channel \cite{duan2009super,Cubitt-Chen-Harrow}.
With the motivation given above, and despite several other strong contenders for the title, we follow \cite{weaver2015quantum} in using the terminology \emph{quantum graph} rather than operator system to emphasize the graph-theoretical flavor of our investigations.  Indeed, even without the connection to quantum information theory, there is already good justification for doing this (\cite{KuperbergWeaver}).  

It is our hope to expand the toolbox available to quantum information theorists by discovering the limits of what methods can be transferred from the well-understood classical graph theory setting into the quantum one; results of this nature have already appeared in works such as \cite{Duan-Severini-Winter,Stahlke,Ortiz-Paulsen,Weaver-quantum-ramsey,Levene-Paulsen-Todorov,Kim-Mehta,weaver2018quantum}.
There are many important classical graph-theoretical concepts that deserve investigation, and if any of these have a good quantum analogue, it can be reasonably expected that they possess a utility similar to their classical counterparts.  One of the most fundamental of these concepts is the notion of connectedness.  We provide a natural definition of quantum connectedness for quantum graphs that generalizes the classical one, and explore what extensions/analogues of classical connectivity theorems hold true in the quantum setting.

\section{Notation}

We denote the space of all $k$ by $n$ matrices with complex entries by $M_{k,n}$,  or by $M_n$ if $k=n$. 
We let $X^\dagger$ denote the Hermitian adjoint of a matrix $X\in M_{k,n}$ and let $\|X\|$ denote the operator norm of $X$ , so that $\|X\|^2$ is the largest eigenvalue of $ X^\dagger X$.
We equip $M_n$ with the inner product given by $\langle X,Y\rangle = \tr(X^\dagger Y)$, 
where $\tr(Z)$ is the 
trace of a matrix $Z\in M_n$. 
We write $I_n$ (or simply~$I$) for the identity matrix in $M_n$. 
A projection is $P \in M_n$ such that $P = P^2 = P^\dagger $, and a nontrivial projection is  a projection which is neither zero nor $I_n$.
We use Dirac's bra-ket notation: $\ket{u} \in \C^n=M_{n,1}$ is a vector, $\bra{u}=\ket{u}^\dagger \in M_{1,n}$ is its adjoint (a linear form), $\braket{u|v}$ is the standard Hilbert space inner product (linear in the second argument) of $u$ and $v$, and $\ket{v}\bra{u}\in M_n$ is the corresponding rank-one operator defined by $\ket{v}\bra{u}(\ket{w})=\braket{u|w}\ket{v}$.  The list $(\ket{e_k})_{k=1}^n$ will always denote the standard basis of $\mathbb{C}^n$.
The cardinality of a set $S$ is denoted by $|S|$.
For $n\in\N$, $[n]$ denotes the set $\{1,2, \dotsc, n\}$.

By a \emph{quantum graph} on $M_n$ we mean an operator system:  A linear subspace of $M_n$ which is closed under taking adjoints and contains the identity matrix.
To any classical graph $G$ with vertex set $[n]$ we associate the quantum graph
\[
\mathcal{S}_G = \spa\big\{  \ket{e_i}\bra{e_j} \mid i=j \text{ or $i$ is adjacent to $j$}\big\} \subseteq M_n.
\]

Given two quantum graphs $\mathcal{U}, \mathcal{V} \subseteq M_n$, by their product we mean 
\[
\mathcal{U}\mathcal{V} =  \spa\big\{ UV \mid U \in \mathcal{U}, V \in \mathcal{V} \big\},
\]
and  we define $\mathcal{U}^m$ for $m\in \mathbb{N}\cup \{0\}$ recursively by
$$
\mathcal{U}^0=\mathbb{C}I_n, \qquad  \mathcal{U}^{k+1}=\mathcal{U}^k\mathcal{U}.
$$
Note that the product of quantum graphs is a quantum graph.

To emphasize the distinction between quantum and non-quantum graphs, we use the adjective \emph{classical} when we are talking about a combinatorial graph. 
We use the notation $i \sim j$ to indicate that two vertices $i$ and $j$ are adjacent in a classical graph.

\section{Connectedness}
In this section, we define what it means for a quantum graph to be ``connected'' and show some equivalences that highlight the similarity to classical connectedness, including a quantum analogue of the base case of the classical tree-packing theorem.  In particular, we show that a classical graph is connected if and only if its associated quantum graph is connected.  Philosophically, the ``vertices'' in a quantum graph correspond to rank one projections, and collections of vertices correspond to possibly higher rank projections.  Because of this, the main obstacle for directly adapting a classical graph concept to quantum graph theory is that we should require such concepts to be coordinate-free.  Indeed, if an orthonormal basis is fixed, there are natural classical graphs that can be associated to any quantum graph such that collections of vertices correspond to projections whose images align with the basis.  We will show that for a connected quantum graph, any choice of orthonormal basis will give rise to a connected classical graph.

The following definition of connectedness is based on the intuition that in a connected graph, there is a path between any two vertices.

\begin{definition}
\label{defn-connected}
A quantum graph $\mathcal{S} \subseteq M_n$ is \emph{connected} if there exists $m\in\N$ such that $\mathcal{S}^m = M_n$.
A quantum graph which is not connected will be called \emph{disconnected}.
\end{definition}

\begin{example}[The quantum hamming cube is connected]
The quantum Hamming cube \cite[Defn. 3.7]{KuperbergWeaver} is the quantum graph 
\[
\mathcal{C}_n = \spa \bigg\{ \bigotimes_{i=1}^nA_i \mid A_i \in M_2, \text{all but one of the $A_i$ are equal to $I_2$} \bigg\} \subseteq M_{2^n}.
\]
Notice that $\mathcal{C}_n^n$ contains
\[
 \spa \bigg\{ \bigotimes_{i=1}^n A_i \mid A_i \in M_2 \bigg\} = M_{2^n},
\]
so $\mathcal{C}_n$ is connected.
\end{example}

Another intuitive condition that we could have used to motivate a definition of connectedness is that in a disconnected classical graph, there is always a partition of the set of vertices into two nonempty pieces such that the two pieces have no edge between them.  The next theorem shows that the quantum analogue of this condition is equivalent to our definition of connectedness.

\begin{theorem}\label{thm-char-connectivity}
Let $\mathcal{S} \subseteq M_n$ be a quantum graph.
Then $\mathcal{S}$ is disconnected if and only if there exists a nontrivial projection $P \in M_n$ such that ${P \mathcal{S} (I_n-P)} = \{0\}$.
\end{theorem}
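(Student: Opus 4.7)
My plan is to establish the two implications separately. For the direction $(\Leftarrow)$---assuming such a $P$ exists, prove $\mathcal{S}$ is disconnected---I will show by induction on $m$ that $P\mathcal{S}^m(I_n - P) = \{0\}$ for every $m \in \N$. The base cases $m = 0$ (from $P(I_n - P) = 0$) and $m = 1$ (the hypothesis) are immediate. For the inductive step, since $\mathcal{S}^{m+1}$ is spanned by products $TU$ with $T \in \mathcal{S}^m$ and $U \in \mathcal{S}$, I insert $I_n = P + (I_n - P)$ between $T$ and $U$ to obtain
\[
PTU(I_n - P) = PTPU(I_n - P) + PT(I_n - P)U(I_n - P).
\]
The second summand vanishes by the inductive hypothesis. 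For the first, the hypothesis $PU(I_n - P) = 0$ gives $U(I_n - P) = (I_n - P)U(I_n - P)$, so
\[
PTPU(I_n - P) = PTP(I_n - P)U(I_n - P) = 0,
\]
using $P(I_n - P) = 0$. Then nontriviality of $P$ lets me pick nonzero $\ket{v}$ in the range of $P$ and $\ket{u}$ in the range of $I_n - P$; the element $\ket{v}\bra{u} \in M_n$ is nonzero and satisfies $P\ket{v}\bra{u}(I_n - P) = \ket{v}\bra{u}$, so $\ket{v}\bra{u} \notin \mathcal{S}^m$ and hence $\mathcal{S}^m \neq M_n$ for every $m$.

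For the direction $(\Rightarrow)$, since $I_n \in \mathcal{S}$ the chain $\mathcal{S}^0 \subseteq \mathcal{S}^1 \subseteq \mathcal{S}^2 \subseteq \cdots$ is nondecreasing, so by finite-dimensionality it stabilizes at some $\mathcal{T} := \mathcal{S}^{m_0}$ satisfying $\mathcal{T}\mathcal{T} = \mathcal{T}$. Together with $I_n \in \mathcal{T}$ and $\mathcal{T}^\dagger = \mathcal{T}$ (inherited from $\mathcal{S}$), this means $\mathcal{T}$ is a unital $*$-subalgebra of $M_n$, and disconnectedness of $\mathcal{S}$ is exactly $\mathcal{T} \neq M_n$.

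The key step is producing the desired projection from the fact that $\mathcal{T}$ is a proper unital $*$-subalgebra. The plan is to invoke the finite-dimensional double commutant theorem, $\mathcal{T}'' = \mathcal{T}$: if $\mathcal{T}' = \mathbb{C}I_n$, then $\mathcal{T} = (\mathbb{C}I_n)' = M_n$, contradicting $\mathcal{T} \neq M_n$. Hence the self-adjoint subspace $\mathcal{T}'$ strictly contains $\mathbb{C}I_n$, and so it contains a non-scalar self-adjoint element $H$. Any spectral projection of $H$ corresponding to a single eigenvalue is a nontrivial projection $P \in \mathcal{T}'$, which therefore commutes with every $S \in \mathcal{S} \subseteq \mathcal{T}$, giving $PS(I_n - P) = SP(I_n - P) = 0$. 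I expect this structural step to be the main obstacle: an equivalent route, avoiding double commutants, is to observe that $P\mathcal{S}(I_n - P) = 0$ is the same as $\mathrm{Im}(I_n - P)$ being an invariant subspace for $\mathcal{S}$, and to appeal to Burnside's theorem applied to $\mathcal{T}$ to produce such a subspace.
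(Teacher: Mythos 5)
Your proof is correct, and the overall skeleton matches the paper's: for one direction you propagate $P\mathcal{S}^m(I_n-P)=\{0\}$ to all powers, and for the other you stabilize the chain $(\mathcal{S}^m)$ into a proper unital $*$-subalgebra and extract a projection from it. The differences are in the details and are worth noting. In the $(\Leftarrow)$ direction the paper first uses self-adjointness of $\mathcal{S}$ to get $(I_n-P)AP=0$ and hence the block decomposition $A=PAP+(I_n-P)A(I_n-P)$, whereas your induction (inserting $I_n=P+(I_n-P)$ and using $U(I_n-P)=(I_n-P)U(I_n-P)$) never invokes self-adjointness, so your argument is marginally more general --- it shows that $P\mathcal{S}(I_n-P)=\{0\}$ alone forces $P\mathcal{S}^m(I_n-P)=\{0\}$ for a subspace $\mathcal{S}$ containing $I_n$. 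In the $(\Rightarrow)$ direction the key lemma differs: the paper appeals to the structure theorem for finite-dimensional $C^*$-algebras (a block decomposition $\mathcal{A}=P_1\mathcal{A}P_1\oplus\cdots\oplus P_k\mathcal{A}P_k$), while you use the double commutant theorem to find a non-scalar self-adjoint element of $\mathcal{T}'$ and take a spectral projection. Both are standard facts of comparable depth; your route has the small advantage of handling uniformly the case where the stabilized algebra is a factor acting with multiplicity (where the central decomposition is trivial and one must pass to the commutant anyway, a point the paper's phrasing glosses over), and your closing observation that the sought projection is equivalent to an invariant subspace for $\mathcal{T}$ (hence obtainable from Burnside) is a clean alternative the paper does not mention.
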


\begin{proof}
Suppose there exists a projection $P \in M_n \setminus\{0,I_n\}$ such that $P \mathcal{S} (I_n-P) = \{0\}$.
Since $\mathcal{S}$ is closed under taking adjoints,
 for every $A \in \mathcal{S}$ we have $PA(I_n-P) =0$ and also $ (I_n-P)AP = (PA^\dagger (I_n-P))^\dagger = 0$.
It follows that 
\[A =(P+(I_n-P))A(P+(I_n-P))= PAP + (I_n-P)A(I_n-P).\]
Thus, for any $A,B\in \mathcal{S}$,
\[
AB = PAPBP + (I_n-P)A(I_n-P)B(I_n-P).
\]
It follows that $PAB(I_n-P)=0$, and so $P\mathcal{S}^2(I_n-P)=\{0\}$, and similarly $P\mathcal{S}^m(I_n-P)=\{0\}$ for any $m\in\N$.
Therefore $\mathcal{S}^m\neq M_n$ for any $m\in \mathbb{N}$.
That is, $\mathcal{S}$ is disconnected.

Now suppose that $\mathcal{S}$ is disconnected.
Since $(\mathcal{S}^m)_{m=1}^\infty$ is an increasing sequence of proper subspaces of the finite-dimensional space $M_n$, it must stabilize at a proper subspace.
Note then that $\mathcal{A} = \bigcup_{m=1}^\infty \mathcal{S}^m$ is a proper $C^*$-subalgebra of $M_n$.
From well-known classical results about the structure of finite-dimensional $C^*$-algebras \cite[Thm. III.1.1 and Cor. III.1.2]{Davidson},
there exist nontrivial disjoint projections $P_1, \dotsc, P_k$ in $M_n$ adding up to $I_n$ such that 
$\mathcal{A} = P_1\mathcal{A}P_1 \oplus \cdots \oplus  P_k\mathcal{A}P_k$.
Any of the projections $P_j$ will then satisfy $P_j \mathcal{A} (I_n-P_j) = \{0\}$,
so in particular $P_1 \mathcal{S} (I_n-P_1) = \{0\}$.
\end{proof}

As a consequence of Theorem \ref{thm-char-connectivity}, quantum connectedness generalizes classical connectedness.

\begin{corollary}
Let $G$ be a classical graph with vertex set $[n]$ and associated quantum graph $\mathcal{S}_G$.
Then $G$ is connected if and only if $\mathcal{S}_G$ is connected.
\end{corollary}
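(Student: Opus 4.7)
The plan is to invoke Theorem \ref{thm-char-connectivity} and argue both directions by producing, respectively, a projection certifying disconnection and a classical vertex partition from such a projection.

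For the easy direction, suppose $G$ is disconnected with connected component vertex sets $A \sqcup B = [n]$, both nonempty. Let $P$ be the diagonal projection onto $\mathrm{span}\{\ket{e_i} : i \in A\}$. A generic spanning element $\ket{e_i}\bra{e_j}$ of $\mathcal{S}_G$ satisfies either $i = j$ or $i \sim j$. In either case, $i$ and $j$ lie in the same component, so they are not separated by $P$ and $I_n-P$; concretely, $P\ket{e_i}\bra{e_j}(I_n-P) = 0$. By linearity, $P\mathcal{S}_G(I_n-P) = \{0\}$, and $\mathcal{S}_G$ is disconnected by Theorem \ref{thm-char-connectivity}.

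For the converse, assume $\mathcal{S}_G$ is disconnected and pick, via Theorem \ref{thm-char-connectivity}, a nontrivial projection $P$ with $P \mathcal{S}_G (I_n-P) = \{0\}$. The main step, and the only mildly delicate one, is to show that $P$ must be diagonal in the standard basis. This uses the fact that $\mathcal{S}_G$ always contains each diagonal rank-one projection $\ket{e_i}\bra{e_i}$ (since $i = i$). Applying the separation condition to these elements gives $P\ket{e_i}\bra{e_i}(I_n-P) = 0$ for each $i \in [n]$, which forces either $P\ket{e_i} = 0$ or $(I_n-P)\ket{e_i} = 0$. Hence every standard basis vector is an eigenvector of $P$, so $P$ is diagonal.

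Let $A = \{i : P\ket{e_i} = \ket{e_i}\}$ and $B = \{i : P\ket{e_i} = 0\}$, giving a partition $[n] = A \sqcup B$ with both parts nonempty (since $P$ is nontrivial). Now for $i \in A$ and $j \in B$, one has $P \ket{e_i}\bra{e_j}(I_n-P) = \ket{e_i}\bra{e_j}$, and the separation condition forces this to be zero whenever $\ket{e_i}\bra{e_j} \in \mathcal{S}_G$. Since $i \ne j$, membership in $\mathcal{S}_G$ would require $i \sim j$; thus no edge of $G$ crosses the partition, and $G$ is disconnected. The main obstacle is the ``$P$ is diagonal'' step, but it is immediate once one notices that $\mathcal{S}_G$ contains the standard matrix units on the diagonal.
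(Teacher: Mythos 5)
Your proof is correct, but one direction is argued quite differently from the paper. The paper proves ``$G$ connected $\Rightarrow$ $\mathcal{S}_G$ connected'' \emph{directly}: a path from $i$ to $j$ of length at most $n$ yields $\ket{e_i}\bra{e_j}$ as a product of matrix units in $\mathcal{S}_G$, so all matrix units lie in $\mathcal{S}_G^n$ and hence $\mathcal{S}_G^n = M_n$; Theorem \ref{thm-char-connectivity} is used only for the other direction. You instead prove the contrapositive, routing \emph{both} directions through Theorem \ref{thm-char-connectivity}, and the key new ingredient is your observation that any nontrivial projection $P$ with $P\mathcal{S}_G(I_n-P)=\{0\}$ must be diagonal: since $\ket{e_i}\bra{e_i}\in\mathcal{S}_G$, the rank-one operator $P\ket{e_i}\bra{e_i}(I_n-P)=\ket{Pe_i}\bra{(I_n-P)e_i}$ vanishes only if $Pe_i=0$ or $Pe_i=e_i$, so every standard basis vector is an eigenvector of $P$. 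That step is sound and gives a clean, symmetric argument; it is essentially a hands-on special case of what Proposition \ref{prop-connectivity-distinguishability} does via the confusability graph $C_v(\mathcal{S})$. What the paper's route buys that yours does not is the explicit quantitative conclusion $\mathcal{S}_G^n = M_n$, i.e., a concrete bound on the exponent $m$ in Definition \ref{defn-connected} mirroring the classical bound on path lengths; what your route buys is uniformity, since both implications become instances of the projection criterion.
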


\begin{proof}
Suppose $G$ is connected.  Then for each $i,j\in [n]$, there is a path $(p_k)_{k=1}^m$ in $G$ such that $p_1=i$, $p_m=j$, and $m\leq n$.  But this means $\ket{e_{p_k}}\bra{e_{p_{k+1}}}\in S_G$ for all $1\leq k\leq m-1$ and so $\ket{e_i}\bra{e_j} = \prod_{k=1}^{m-1}\ket{e_{p_k}}\bra{e_{p_{k+1}}}\in S_G^m\subseteq S_G^n$.  As $\{\ket{e_i}\bra{e_j}\}_{1\leq i,j\leq n}$ forms a basis for $M_n$, this implies $S_G^n=M_n$, and so $S_G$ is connected.

On the other hand, suppose $G$ is disconnected.  Then $[n]$ can be partitioned into two nonempty sets $K$ and $L$ that are not connected to each other by any edge in $G$. This implies that $\ket{e_i}\bra{e_j}$ and $\ket{e_j}\bra{e_i}$  are orthogonal to $S_G$ whenever $i\in K$ and $j\in L$.  Thus, if $P = \sum_{j \in K} \ket{e_j}\bra{e_j}$ is the orthogonal projection onto $\spa\{ e_j\}_{j \in K}$, then $P \mathcal{S}_G (I_n-P) = \{0\}$.  And so, by Theorem \ref{thm-char-connectivity}, $S_G$ is disconnected.
\end{proof}

Observe that a different way of stating Theorem \ref{thm-char-connectivity} is the following:  A quantum graph $\mathcal{S} \subseteq M_n$ is connected if and only if 
whenever $P_1,P_2$ are nontrivial disjoint projections adding up to $I_n$, we have $\dim P_1\mathcal{S}P_2 + \dim P_2\mathcal{S}P_1 \ge 2$.
This suggests a quantum version of the following particular case of the tree-packing theorem of Tutte \cite{Tutte} and Nash-Williams \cite{Nash-Williams}:  A classical graph contains a spanning tree (i.e., it is connected) if and only if every partition $\mathscr{P}$ of its vertex set has at least $|\mathscr{P}|-1$ cross-edges (that is, edges joining two vertices that belong to different pieces of the partition).

\begin{theorem}\label{thm-TNW-case-k-equals-1}
A quantum graph $\mathcal{S} \subseteq M_n$ is connected if and only if $\sum_{i\not=j} \dim \big[P_j \mathcal{S} P_i\big] \ge 2(m-1)$ whenever $P_1,\dotsc,P_m$ are nontrivial disjoint projections adding up to the identity.
\end{theorem}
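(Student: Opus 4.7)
The plan is to show both directions by reducing to the $m=2$ case, which is essentially Theorem \ref{thm-char-connectivity}. The backward direction is the easy one: by contrapositive, if $\mathcal{S}$ is disconnected, then by Theorem \ref{thm-char-connectivity} there is a nontrivial projection $P$ with $P\mathcal{S}(I_n-P)=\{0\}$. Taking adjoints gives $(I_n-P)\mathcal{S} P=\{0\}$ as well, so the partition $P_1=P$, $P_2=I_n-P$ yields $\sum_{i\ne j}\dim P_j\mathcal{S} P_i = 0 < 2 = 2(m-1)$, contradicting the stated inequality.

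For the forward direction, I would introduce an auxiliary classical graph $H$ with vertex set $[m]$ and an edge between $i$ and $j$ precisely when $P_j\mathcal{S} P_i\ne\{0\}$ (this is symmetric in $i,j$ since $\mathcal{S}$ is adjoint-closed, so $P_i\mathcal{S} P_j=(P_j\mathcal{S} P_i)^\dagger$). The key claim is that if $\mathcal{S}$ is connected then $H$ is connected. I would prove this by contraposition: if $H$ splits into nonempty classes $K, L\subseteq[m]$ with no edges between them, set $Q=\sum_{k\in K}P_k$. Then $Q$ is a nontrivial projection with $I_n-Q=\sum_{l\in L}P_l$, and
\[
Q\mathcal{S}(I_n-Q) = \sum_{k\in K,\, l\in L} P_k\mathcal{S} P_l = \{0\},
\]
because each summand vanishes by hypothesis on $H$. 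Theorem \ref{thm-char-connectivity} then gives that $\mathcal{S}$ is disconnected.

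Once $H$ is known to be connected, it has at least $m-1$ edges. For each edge $\{i,j\}$ of $H$, both $P_j\mathcal{S} P_i$ and $P_i\mathcal{S} P_j$ have dimension at least $1$, contributing $2$ to the sum $\sum_{i\ne j}\dim P_j\mathcal{S} P_i$. Summing over edges yields the bound $2(m-1)$. The only step requiring care is the reduction to Theorem \ref{thm-char-connectivity} via the coarse-grained projection $Q$, and verifying that nontriviality of each $P_k$ together with nonemptiness of $K, L$ makes $Q$ nontrivial; the rest of the argument is an edge count in a connected graph.
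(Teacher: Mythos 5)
Your proof is correct and follows essentially the same route as the paper's: the $m=2$ case of the inequality combined with Theorem \ref{thm-char-connectivity} for one direction, and for the other the auxiliary classical graph on $[m]$ with edges where $P_i\mathcal{S}P_j\neq\{0\}$, shown connected by coarse-graining the projections and again invoking Theorem \ref{thm-char-connectivity}, followed by the count of at least $m-1$ edges each contributing $2$ to the sum. No gaps.
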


\begin{proof}
Suppose first that $P \in M_n$ is a nontrivial projection such that
\[
\dim \big[P\mathcal{S}(I_n-P)\big] + \dim \big[(I_n-P)\mathcal{S}P] \ge 2.
\]
Then $\dim \big[P\mathcal{S}(I_n-P)\big] = \dim \big[(I_n-P)\mathcal{S}P\big]=1$, because $\mathcal{S}$ is closed under taking adjoints, and so $P\mathcal{S}(I_n-P) \not= \{0\}$.  By Theorem \ref{thm-char-connectivity}, $\mathcal{S}$ is connected.

Now assume that $\mathcal{S}$ is connected, and let $P_1,\dotsc,P_m$ be nontrivial disjoint projections adding up to the identity. 
Define a classical graph $G$ on $[m]$ via $i \sim j$ if and only if $P_i \mathcal{S} P_j \not= \{0\}$.
We claim that $G$ is a connected classical graph. Otherwise, we can partition $[m]$ into disjoint nonempty subsets $A$ and $B$ such that $i \in A$ and $j\in B$ implies $P_i \mathcal{S} P_j = \{0\}$.
But this would imply
\[
\Big( \sum_{i\in A} P_i  \Big) \mathcal{S} \Big( \sum_{j\in B} P_j \Big) = \{0\},
\]
contradicting the fact that $\mathcal{S}$ is connected, by Theorem \ref{thm-char-connectivity}.
Since $G$ is connected it must have at least $m-1$ edges, which implies $\sum_{i<j} \dim \big[P_i \mathcal{S} P_j\big] \ge m-1$.
\end{proof}

For any classical graph $G$, there is a canonical quantum graph $\mathcal{S}_G$ associated to $G$.  To go in the other direction and associate a classical graph to a given quantum graph, an orthonormal basis (o.n.b) for $\mathbb{C}^n$ must first be chosen.  If $v=(\ket{v_k})_{k=1}^n$ is an (ordered) o.n.b for $\mathbb{C}^n$, 
{ then one of the most natural classical graphs we can associate to a quantum graph $\mathcal{S}$ with respect to $v$ is the graph $C_v(\mathcal{S})$ with vertex set $[n]$, where $i,j\in [n]$ are adjacent exactly when $\bra{v_i}A\ket{v_j}\neq 0$ for some $A\in \mathcal{S}$.
We call $C_v(\mathcal{S})$ the \emph{confusability graph of $\mathcal{S}$ with respect to $v$} 
(note that our terminology does not agree with that of \cite{Kim-Mehta}). }
It is not hard to see that if $v$ is the standard basis, then $C_v(\mathcal{S}_G)=G$ for any classical graph $G$, and it is this property that informs our choice of graph construction.  We have already seen that quantum connectedness is a generalization of classical connectedness.  Even so, the following proposition allows us to rephrase quantum connectedness in terms of classical connectedness.

\begin{proposition}\label{prop-connectivity-distinguishability}
Let $\mathcal{S} \subseteq M_n$ be a quantum graph.
Then $\mathcal{S}$ is connected if and only if $C_v(\mathcal{S})$ is connected for every o.n.b. $v = (\ket{v_k})_{k=1}^n$ of $\C^n$.
\end{proposition}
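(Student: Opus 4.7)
The plan is to prove both directions by directly invoking Theorem \ref{thm-char-connectivity}, which characterizes (dis)connectedness in terms of nontrivial projections $P$ satisfying $P\mathcal{S}(I_n-P)=\{0\}$. The key dictionary is: such a projection $P$ corresponds precisely to a choice of o.n.b.\ adapted to the decomposition $\C^n = \operatorname{range}(P) \oplus \operatorname{range}(I_n-P)$, which in turn yields a disconnecting partition of the vertex set of $C_v(\mathcal{S})$.

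For the contrapositive of the forward direction, I would assume $C_v(\mathcal{S})$ is disconnected for some o.n.b.\ $v = (\ket{v_k})_{k=1}^n$. Then $[n]$ partitions into nonempty sets $K, L$ with no edges between them, meaning $\bra{v_i} A \ket{v_j} = 0$ for every $A \in \mathcal{S}$, $i \in K$, $j \in L$. Setting $P = \sum_{k \in K} \ket{v_k}\bra{v_k}$ (a nontrivial projection), one checks that $P\mathcal{S}(I_n-P) = \{0\}$: for any $A \in \mathcal{S}$, $PA(I_n-P) = \sum_{i \in K, j \in L} \bra{v_i}A\ket{v_j}\, \ket{v_i}\bra{v_j} = 0$. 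Theorem \ref{thm-char-connectivity} then says $\mathcal{S}$ is disconnected, contradicting the hypothesis.

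For the contrapositive of the backward direction, assume $\mathcal{S}$ is disconnected. By Theorem \ref{thm-char-connectivity} there is a nontrivial projection $P$ with $P\mathcal{S}(I_n-P) = \{0\}$; taking adjoints and using that $\mathcal{S}$ is self-adjoint also gives $(I_n-P)\mathcal{S}P = \{0\}$. I would then choose an o.n.b.\ $(\ket{v_k})_{k \in K}$ of $\operatorname{range}(P)$ and extend it by an o.n.b.\ $(\ket{v_k})_{k \in L}$ of $\operatorname{range}(I_n-P)$, where $K \sqcup L = [n]$ and both pieces are nonempty. For $i \in K$ and $j \in L$ we have $P\ket{v_i} = \ket{v_i}$ and $(I_n-P)\ket{v_j} = \ket{v_j}$, so for any $A \in \mathcal{S}$, $\bra{v_i} A \ket{v_j} = \bra{v_i} P A (I_n-P) \ket{v_j} = 0$, and symmetrically $\bra{v_j} A \ket{v_i} = 0$. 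Thus $K$ and $L$ are not joined by any edge in $C_v(\mathcal{S})$, so $C_v(\mathcal{S})$ is disconnected.

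There is no real obstacle here; the argument is essentially a translation between projections $P$ and the partitions of o.n.b.\ indices they induce, with Theorem \ref{thm-char-connectivity} doing the heavy lifting in both directions. The only minor care needed is to ensure the chosen basis makes both $K$ and $L$ nonempty, which is guaranteed by the nontriviality of $P$.
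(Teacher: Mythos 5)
Your proof is correct and follows essentially the same route as the paper: both directions are handled via the projection characterization of Theorem \ref{thm-char-connectivity}, translating between a disconnecting projection $P$ and a basis/partition adapted to $\operatorname{range}(P)\oplus\operatorname{range}(I_n-P)$. The only difference is cosmetic (you present the two contrapositives in the opposite order), so nothing further is needed.
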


\begin{proof}
Suppose $\mathcal{S}$ is disconnected, so that by Theorem \ref{thm-char-connectivity} there exists a nontrivial projection $P \in M_n$ such that $P \mathcal{S} (I_n-P) = \{0\}$.
Let $v = (\ket{v_k})_{k=1}^n$ be an o.n.b. of $\C^n$ such that for some $1\leq m<n$, $(\ket{v_k})_{k=1}^m$ is an o.n.b. for the range of $P$, and hence $(\ket{v_k})_{k=m+1}^n$ is an o.n.b for the range of $I_n-P$. For each $1 \le i \le m$, $m+1 \le j \le n$, and $A \in \mathcal{S}$ we then have
$0 = \ket{v_i}\bra{v_i} A \ket{v_j}\bra{v_j}$, which implies 
$\braket{v_i | A | v_j} = 0$
and therefore $i \not \sim j$ in $C_v(\mathcal{S})$, showing that $C_v(\mathcal{S})$ is disconnected.

Suppose now that there exists $v = (\ket{v_k})_{k=1}^n$ an o.n.b. of $\C^n$ such that $C_v(\mathcal{S})$ is disconnected.
Let $K,L$ partition $[n]$ into disjoint nonempty sets such that for all $i\in K$ and $j \in L$ we have $i \not \sim j$ in  $C_v(\mathcal{S})$,
that is, $\braket{v_i | A | v_j} = 0$ for all $A \in \mathcal{S}$.
Set $P = \sum_{i\in K} \ket{v_i}\bra{v_i}$, so that $I_n - P = \sum_{j\in L} \ket{v_j}\bra{v_j}$ and thus, for every $A \in \mathcal{S}$,
\[
PA(I_n-P) = \sum_{i\in K, j \in L} \ket{v_i}\bra{v_i} A \ket{v_i}\bra{v_i} = 0,
\]
which implies that $\mathcal{S}$ is disconnected,  by Theorem \ref{thm-char-connectivity}.
\end{proof}

\section{$k$-connectedness}
In the previous section, we defined a notion of connectedness for quantum graphs that generalizes the notion of connectedness for classical graphs.  In this section, we provide a measure for the amount of connectedness a quantum graph has by way of a quantum analogue of connectivity.  In the classical case, the connectivity of a graph $G$ is the number of vertices that would have to be removed from $G$ to create a graph that either is disconnected or contains a single vertex.  This idea can be mimicked for quantum graphs once one considers how to properly define a notion of creating a ``subgraph'' by ``removal of vertices''.

In other words, what is needed is a notion of \emph{restriction}:  Given a quantum graph and a ``subset of vertices'', we would like to define the ``subgraph'' obtained when we restrict our attention to the given subset.
This has already been considered by Weaver in the more general setting of quantum relations \cite[Sec. 3]{weaver2015quantum}, and we adopt the same definition.

Concretely, given a quantum graph $\mathcal{S}\subseteq M_n$ and a projection $P\in M_n$, we consider $P\mathcal{S}P$ to be a subgraph of $\mathcal{S}$ restricted to $M_{\rank(P)}\cong PM_nP$ 
(it is easy to check that $P\mathcal{S}P$ is indeed a quantum graph).
We are led to the following definitions.

{
\begin{definition}
\label{defn-separator}
Let $\mathcal{S} \subseteq M_n$ be a quantum graph.  A projection $P\in M_n$ is called a \emph{separator} of $\mathcal{S}$ if $(I_n-P)\mathcal{S}(I_n-P)$ is either disconnected (viewed as a subspace of $M_{n-\rank(P)}$) or 1-dimensional.
\end{definition}
}

\begin{remark}\label{defn-separators}
By Definition \ref{defn-connected}, a projection $P$ such that $\rank(P)<n-1$ is a separator for a quantum graph $\mathcal{S}\subseteq M_n$ if and only if there is no $m\in \mathbb{N}$ such that
\[
\big((I_n-P)\mathcal{S}(I_n-P)\big)^m = (I_n-P)M_n(I_n-P).
\]

Theorem \ref{thm-char-connectivity} provides another characterization:  A projection $P$ such that $\rank(P)<n-1$ is a separator if and only if there exist nontrivial projections $Q_1$ and $Q_2$, disjoint from each other and from $P$, such that $Q_1+Q_2=I_n-P$ and $Q_1\mathcal{S}Q_2=\{0\}$.

We will use whichever property of separator is most useful in what is to follow.
\end{remark}

\begin{definition}
Let $k\in \N$.  A quantum graph $\mathcal{S} \subseteq M_n$ is called \emph{$k$-connected} if every separator for $\mathcal{S}$ has rank at least $k$.
\end{definition}

In particular, a quantum graph $\mathcal{S}$ on $M_n$ is connected if and only if either $\mathcal{S}$ is 1-connected or $\mathcal{S}=M_n$.

Let us now prove that this quantum notion of $k$-connectedness generalizes the classical one.

\begin{proposition}
Let $G$ be a classical graph with vertex set $[n]$ and associated quantum graph $\mathcal{S}_G$, and let $k\in\N$.
Then $G$ is $k$-connected if and only if $\mathcal{S}_G$ is $k$-connected.
\end{proposition}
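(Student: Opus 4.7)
The plan is to prove both directions via coordinate (diagonal) projections and the existing characterization in Theorem \ref{thm-char-connectivity}. For the forward direction, if $G$ is not $k$-connected then either $n \le k$, in which case any rank-$(n-1)$ projection $P$ automatically gives a separator because $(I_n-P)M_n(I_n-P)$ is one-dimensional, or there is a vertex set $S$ with $|S| \le k-1$ such that $G-S$ is disconnected; in the latter case I would take $P = \sum_{i\in S}\ket{e_i}\bra{e_i}$ and observe that $(I_n-P)\mathcal{S}_G(I_n-P)$ is naturally identified with $\mathcal{S}_{G-S}$, which is disconnected by the corollary already established, so $P$ is a separator of rank at most $k-1$.

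The reverse direction is the delicate part and is where the main obstacle lies: a quantum separator of $\mathcal{S}_G$ need not itself be a diagonal projection, so one must extract from an arbitrary quantum separator a classical vertex cut aligned with the standard basis and of no larger size. Starting with a separator $P$ of rank $r \le k-1$ and setting $Q = I_n-P$, I would first handle the trivial case $n-r \le 1$, which gives $n \le k$ directly, and then rule out the possibility that $Q\mathcal{S}_G Q$ is one-dimensional: since each $\ket{e_i}\bra{e_i}$ lies in $\mathcal{S}_G$, the rank-one matrices $Q\ket{e_i}\bra{e_i}Q$ would all be scalar multiples of $Q$, forcing $Q$ itself to have rank at most one. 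Hence $Q\mathcal{S}_G Q$ must be disconnected, and Theorem \ref{thm-char-connectivity} applied inside $QM_nQ$ provides nontrivial disjoint projections $R_1, R_2$ with $R_1+R_2 = Q$ and $R_1\mathcal{S}_G R_2 = \{0\}$.

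The last step is to read a classical separator off of $R_1$ and $R_2$. For each pair $(i,j)$ with $i = j$ or $i \sim_G j$ we have $R_1\ket{e_i}\bra{e_j}R_2 = 0$, and since this is an outer product either $R_1\ket{e_i} = 0$ or $R_2\ket{e_j} = 0$. Partitioning $[n]$ into $A = \{i : R_1\ket{e_i} \ne 0\}$, $B = \{i : R_2\ket{e_i} \ne 0\}$, and $C = [n]\setminus(A\cup B)$, the case $i = j$ yields $A \cap B = \emptyset$, the case $i \sim_G j$ yields no edge between $A$ and $B$ in $G$, and $C$ consists of those $i$ with $\ket{e_i}$ in the range of $P$, so $|C| \le \rank(P) \le k-1$. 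Since $R_1$ and $R_2$ are nonzero, both $A$ and $B$ are nonempty, hence $G-C$ is disconnected and $G$ is not $k$-connected.
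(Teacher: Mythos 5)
Your proof is correct, and while the forward direction is essentially the paper's argument (a classical vertex cut inducing a disconnected subgraph yields a diagonal separator of the same rank, plus the degenerate small-$n$ case), the reverse direction takes a genuinely different route at the key step. The paper chooses an orthonormal basis aligned with $P$, $Q_1$, $Q_2$ and invokes the permutation lemma of \cite{Kim-Mehta} (equivalently \cite[Lemma 7.28]{paulsen2016entanglement}) to arrange $\braket{v_i|e_i}\neq 0$ for every $i$, and then shows that the indices labelling the range of $P$ form a vertex cut of $G$. You instead exploit the rank-one structure of the generators of $\mathcal{S}_G$ directly: since $R_1\ket{e_i}\bra{e_j}R_2$ is an outer product, its vanishing forces one factor to vanish, and the resulting partition of $[n]$ into $A=\{i: R_1\ket{e_i}\neq 0\}$, $B=\{i: R_2\ket{e_i}\neq 0\}$ and their complement $C$ hands you a classical cut of size at most $\rank(P)$ with no change of basis and no external lemma. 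This is more elementary and self-contained; the paper's method has the advantage of being the same template it reuses elsewhere (e.g.\ in Proposition \ref{prop-k-connectivity-distinguishability} and in the locally-general-position argument), where the generators are no longer rank one and your factorization trick would not apply. One small remark: your separate step ruling out the possibility that $Q\mathcal{S}_G Q$ is one-dimensional, while correct, is not strictly needed, since for $\rank(Q)\ge 2$ a one-dimensional corner $Q\mathcal{S}Q=\C Q$ is automatically disconnected, which is how Remark \ref{defn-separators} lets the paper pass directly to the projections $Q_1,Q_2$.
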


\begin{proof}

Suppose $\mathcal{S}_G$ is $k$-connected.  If $G$ is a complete graph, then $G$ is $k$-connected.  So suppose $G$ is not a complete graph.  Let $\{p_i\}_{i=1}^m\subseteq [n]$ be a vertex cut of $G$ that induces a disconnected subgraph of $G$.  Then $P=\sum_{i=1}^m \ket{p_i}\bra{p_i}$ is a separator of rank $m$ for $\mathcal{S}_G$.  Thus $m\geq k$, which implies that $G$ is $k$-connected.

Suppose now that $G$ is $k$-connected.  If every separator of $\mathcal{S}_G$ has rank at least $n-1$, then $\mathcal{S}_G$ is $k$-connected by definition. So suppose there is a separator $P \in M_n$  such that $\rank(P)<n-1$.   Then there exist nontrivial disjoint projections $Q_1, Q_2 \in M_n$, also disjoint from $P$, such that $I_n = P + Q_1 + Q_2$ and $Q_1\mathcal{S}_GQ_2 = \{0\}$.
Let $(\ket{v_i})_{i=1}^n$ be an orthonormal basis for $\C^n$ which consists of the union of some orthonormal bases for the ranges of $P$, $Q_1$, and $Q_2$. By permuting the indices if necessary, it follows from \cite[Lemma 13]{Kim-Mehta} (which in turn is \cite[Lemma 7.28]{paulsen2016entanglement}) that we can assume $\braket{v_i|e_i} \not=0$ for each $1\le i \le n$.
Let $K,L_1, L_2$ be disjoint subsets of $[n]$ such that $L_1,L_2$ are nonempty and $K\cup L_1\cup L_2=[n]$, and such that 
\[
P = \sum_{i\in K} \ket{v_i}\bra{v_i}, \quad Q_1 = \sum_{i\in L_1} \ket{v_i}\bra{v_i}, \quad Q_2 = \sum_{i\in L_2} \ket{v_i}\bra{v_i}.
\]
Notice that if $k \sim l$ in $G$ (i.e., $\ket{e_k}\bra{e_l}\in \mathcal{S}_G$), then
\[
0 = Q_1 \ket{e_k}\bra{e_l} Q_2 = \sum_{i \in L_1, j \in L_2} \ket{v_i}\braket{v_i|e_k}\braket{e_l|v_j} \bra{v_j},
\]
and thus $\braket{v_i|e_k}\braket{e_l|v_j} = 0$ for each $i \in L_1$ and $j\in L_2$.
But for $i \in L_1$ and $j\in L_2$ we have $\braket{v_i|e_i}\braket{e_j|v_j} \not= 0$, which means $i \not\sim j$ in $G$.
Since $G$ is $k$-connected, this implies $k \le |K| = \rank(P)$, showing that $\mathcal{S}_G$ is $k$-connected.
\end{proof}

Just as in the case of connectedness (see Proposition \ref{prop-connectivity-distinguishability}), $k$-connectedness of a quantum graph is equivalent to the $k$-connectedness of all its confusability graphs.

\begin{proposition}\label{prop-k-connectivity-distinguishability}
Let $\mathcal{S} \subseteq M_n$ be a quantum graph, and $k\in\N$.
Then $\mathcal{S}$ is $k$-connected if and only if $C_v(\mathcal{S})$ is $k$-connected for every o.n.b. $v = (\ket{v_i})_{i=1}^n$ of $\C^n$.
\end{proposition}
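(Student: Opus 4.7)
The plan is to argue both implications by contraposition, adapting the proof of Proposition~\ref{prop-connectivity-distinguishability} and invoking Remark~\ref{defn-separators} (a reformulation of Theorem~\ref{thm-char-connectivity} applied to $(I_n-P)\mathcal{S}(I_n-P)$) to translate between separators of $\mathcal{S}$ and partitions of the identity annihilated by $\mathcal{S}$ on both sides.

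For the forward direction, suppose $C_v(\mathcal{S})$ fails to be $k$-connected for some o.n.b.\ $v$, so that it admits a vertex cut $S\subseteq[n]$ with $|S|<k$; set $P=\sum_{i\in S}\ket{v_i}\bra{v_i}$, a projection of rank $|S|$. I would show that $P$ is a separator of $\mathcal{S}$ by splitting on the two alternatives for a vertex cut. If $|[n]\setminus S|=1$, then $(I_n-P)\mathcal{S}(I_n-P)$ is contained in the one-dimensional space $(I_n-P)M_n(I_n-P)$ and contains the nonzero element $I_n-P$, so it is exactly one-dimensional. Otherwise $C_v(\mathcal{S})-S$ is disconnected and $[n]\setminus S$ partitions into nonempty sets $K,L$ with no $C_v(\mathcal{S})$-edges across; setting $Q_1=\sum_{i\in K}\ket{v_i}\bra{v_i}$ and $Q_2=\sum_{j\in L}\ket{v_j}\bra{v_j}$, the direct computation $Q_1AQ_2=\sum_{i\in K,\,j\in L}\ket{v_i}\braket{v_i|A|v_j}\bra{v_j}=0$ for every $A\in\mathcal{S}$ gives $Q_1\mathcal{S}Q_2=\{0\}$, and Remark~\ref{defn-separators} again identifies $P$ as a separator of rank $<k$.

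For the reverse direction, pick a separator $P$ of $\mathcal{S}$ with $\rank(P)<k$ and split into the two cases of Definition~\ref{defn-separator}. If $(I_n-P)\mathcal{S}(I_n-P)=\mathbb{C}(I_n-P)$, choose an o.n.b.\ $v$ obtained by concatenating o.n.b.'s for the ranges of $P$ and $I_n-P$; for distinct indices $i,j$ lying outside the range of $P$ and any $A\in\mathcal{S}$, one has $\braket{v_i|A|v_j}=\braket{v_i|(I_n-P)A(I_n-P)|v_j}\in\mathbb{C}\braket{v_i|v_j}=\{0\}$, so $C_v(\mathcal{S})-S$ has no edges (where $S$ is the index set of $P$), making $S$ a vertex cut of $C_v(\mathcal{S})$ of size $<k$. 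If instead $(I_n-P)\mathcal{S}(I_n-P)$ is disconnected, Remark~\ref{defn-separators} furnishes nontrivial projections $Q_1,Q_2$ disjoint from each other and from $P$ with $P+Q_1+Q_2=I_n$ and $Q_1\mathcal{S}Q_2=\{0\}$; choosing $v$ compatible with the decomposition of $\mathbb{C}^n$ into the ranges of $P$, $Q_1$, $Q_2$, the index set of $P$ separates the nonempty index sets of $Q_1$ and $Q_2$ in $C_v(\mathcal{S})$, yielding once more a vertex cut of size $<k$.

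The only real subtlety is the case split in the reverse direction; the one-dimensional case corresponds to a classical vertex cut that isolates every remaining vertex from the others, but this is handled cleanly as soon as the o.n.b.\ is chosen to be compatible with the decomposition of $\mathbb{C}^n$ by $P$. Beyond this bookkeeping, the argument is a direct iteration of the reasoning already used for Proposition~\ref{prop-connectivity-distinguishability}.
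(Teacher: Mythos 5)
Your proof is correct and follows essentially the same route as the paper's: both translate between separators of $\mathcal{S}$ and classical vertex cuts of $C_v(\mathcal{S})$ by choosing an orthonormal basis aligned with the projections $P$, $Q_1$, $Q_2$ and using the computation $Q_1AQ_2=\sum_{i\in L_1,\,j\in L_2}\ket{v_i}\braket{v_i|A|v_j}\bra{v_j}$ together with Remark~\ref{defn-separators}. The only differences are cosmetic: you argue both implications by contraposition and treat the one-dimensional (single remaining vertex) case explicitly, where the paper disposes of it via the ``complete graph'' and ``rank at least $n-1$'' shortcuts.
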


\begin{proof}
Suppose $\mathcal{S}$ is $k$-connected, and let $(\ket{v_i})_{i=1}^n$ be an orthonormal basis for $\C^n$.  If $C_v(\mathcal{S})$ is a complete graph, then $C_v(\mathcal{S})$ is $k$-connected.  So suppose $C_v(\mathcal{S})$ is not a complete graph.
Let $K,L_1,L_2$ be disjoint subsets of $[n]$ such that $L_1, L_2$ are nonempty and $K\cup L_1\cup L_2=[n]$, and such that there are no edges in $C_v(\mathcal{S})$ between $L_1$ and $L_2$.
Notice that this means for each $i\in L_1$, $j \in L_2$ and $A \in \mathcal{S}$ we have $\braket{v_i | A | v_j} = 0$.
Define
\[
P = \sum_{i\in K} \ket{v_i}\bra{v_i}, \quad Q_1 = \sum_{i\in L_1} \ket{v_i}\bra{v_i}, \quad Q_2 = \sum_{i\in L_2} \ket{v_i}\bra{v_i}.
\]
Notice that for each $A \in \mathcal{S}$
\[
Q_1 A Q_2 = \sum_{i \in L_1, j \in L_2} \ket{v_i}\braket{v_i | A | v_j} \bra{v_j} =0.
\]
Therefore $P$ is a separator for $\mathcal{S}$, so $k \le \rank(P) = |K|$, showing that $C_v(\mathcal{S})$ is $k$-connected.

Assume now that $C_v(\mathcal{S})$ is $k$-connected for every o.n.b. $v$ of $\mathbb{C}^n$.  If every separator of $\mathcal{S}_G$ has rank at least $n-1$, then $\mathcal{S}_G$ is $k$-connected by definition.  So suppose there is a separator $P \in M_n$ such that $\rank(P)<n-1$.  Then there exist nontrivial disjoint projections $Q_1, Q_2 \in M_n$, also disjoint from $P$, such that $I_n = P + Q_1 + Q_2$ and $Q_1\mathcal{S}Q_2 = \{0\}$.
Let $v=(\ket{v_i})_{i=1}^n$ be an orthonormal basis for $\C^n$ which consists of the union of some orthonormal bases for the ranges of $P$, $Q_1$ and $Q_2$, and let $K,L_1,L_2$ be disjoint subsets of $[n]$ such that $L_1,L_2$ are nonempty and $K\cup L_1\cup L_2=[n]$, and such that
\[
P = \sum_{i\in K} \ket{v_i}\bra{v_i}, \quad Q_1 = \sum_{i\in L_1} \ket{v_i}\bra{v_i}, \quad Q_2 = \sum_{i\in L_2} \ket{v_i}\bra{v_i}.
\]
Notice that for each $A \in \mathcal{S}$ we have
\[
0 = Q_1 A Q_2 = \sum_{i \in L_1, j \in L_2} \ket{v_i}\braket{v_i | A | v_j} \bra{v_j},
\]
which implies that
for each $i\in L_1$, $j \in L_2$ and $A \in \mathcal{S}$ we have $\braket{v_i | A | v_j} = 0$.
But this means that there are no edges in $C_v(\mathcal{S})$ between $L_1$ and $L_2$,
so by the $k$-connectivity of $C_v(\mathcal{S})$ we conclude $k \le |K| = \rank(P)$ and therefore $\mathcal{S}$ is $k$-connected.
\end{proof}

In the classical setting,  a graph on $n$ vertices is $(n-1)$-connected if and only if it is complete.
In the quantum setting this is no longer true, but we can still characterize the maximally connected quantum graphs.
\begin{proposition}\label{prop-maximal-connectivity}
Let $\mathcal{S} \subseteq M_n$ be a quantum graph. Then $\mathcal{S}$ is $(n-1)$-connected if and only if $A\mathcal{S}B \not=\{0\}$ for every $A,B \in M_n\setminus\{0\}$.
\end{proposition}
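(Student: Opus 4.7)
The plan is to prove each direction by contradiction, with Theorem \ref{thm-char-connectivity} doing most of the work.

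For the forward direction, I would suppose $\mathcal{S}$ is $(n-1)$-connected while some nonzero $A, B \in M_n$ satisfy $A\mathcal{S}B = \{0\}$, and aim to exhibit a separator of rank $n-2$ contradicting $(n-1)$-connectedness. The first step is a reduction to rank-one projections: letting $P$ be the orthogonal projection onto $(\ker A)^\perp$ and $Q$ the orthogonal projection onto $\mathrm{range}(B)$, the identities $A = AP$ and $B = QB$, together with the injectivity of $A$ on $\mathrm{range}(P)$ and the surjectivity of $B$ onto $\mathrm{range}(Q)$, force $P\mathcal{S}Q = \{0\}$. Picking unit vectors $\ket{u} \in \mathrm{range}(P)$ and $\ket{w} \in \mathrm{range}(Q)$, we then have $\braket{u|S|w} = 0$ for every $S \in \mathcal{S}$; specializing to $S = I_n$ yields $\ket{u} \perp \ket{w}$. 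Now let $\Pi$ be the orthogonal projection onto $\spa\{\ket{u},\ket{w}\}^\perp$, so $\rank(\Pi) = n-2$. In the basis $\{\ket{u},\ket{w}\}$ for the range of $I_n - \Pi$, every element of $(I_n-\Pi)\mathcal{S}(I_n-\Pi)$ is diagonal (the $(u,w)$ entry vanishes by construction, and the $(w,u)$ entry by adjoint closure of $\mathcal{S}$), so this operator system has dimension $1$ or $2$. If $1$-dimensional it equals $\C(I_n-\Pi)$ and $\Pi$ is a separator by definition; if $2$-dimensional it equals $\spa\{\ket{u}\bra{u}, \ket{w}\bra{w}\}$, which is disconnected by Theorem \ref{thm-char-connectivity}. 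Either way $\Pi$ is a separator of rank $n-2 < n-1$, a contradiction.

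For the reverse direction, I would assume $A\mathcal{S}B \neq \{0\}$ for all nonzero $A, B$ and suppose $\mathcal{S}$ has a separator $\Pi$ of rank at most $n-2$; setting $Q := I_n - \Pi$, which has rank at least $2$, there are two cases for why $\Pi$ is a separator. If $Q\mathcal{S}Q$ is disconnected inside $QM_nQ \cong M_{\rank(Q)}$, Theorem \ref{thm-char-connectivity} applied inside $QM_nQ$ yields nonzero disjoint projections $Q_1, Q_2 \le Q$ with $Q_1(Q\mathcal{S}Q)Q_2 = \{0\}$, hence $Q_1\mathcal{S}Q_2 = \{0\}$. If instead $Q\mathcal{S}Q$ is $1$-dimensional, it must equal $\C Q$, because it is an adjoint-closed subspace containing the identity $Q$ of $QM_nQ$; then any two orthogonal rank-one projections $R_1, R_2 \le Q$ (which exist because $\rank(Q) \ge 2$) satisfy $R_1\mathcal{S}R_2 \subseteq R_1(\C Q)R_2 = \C R_1R_2 = \{0\}$. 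In either case the hypothesis is contradicted.

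The main subtlety I expect to spend care on is the reduction in the forward direction from arbitrary nonzero $A, B$ to the rank-one projections $\ket{u}\bra{u}$ and $\ket{w}\bra{w}$, specifically the passage from $A\mathcal{S}B = \{0\}$ to $P\mathcal{S}Q = \{0\}$ for the range projections; the rest of the argument is essentially bookkeeping once the right projections are in hand.
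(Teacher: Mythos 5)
Your proof is correct and takes essentially the same route as the paper's: the paper likewise passes from $A\mathcal{S}B=\{0\}$ to rank-one projections built from vectors in $\mathrm{range}(A^\dagger)=(\ker A)^\perp$ and $\mathrm{range}(B)$, using $I_n\in\mathcal{S}$ to get their orthogonality, and takes $I_n-\ket{u}\bra{u}-\ket{v}\bra{v}$ as the rank-$(n-2)$ separator. The other direction is exactly the characterization of low-rank separators recorded in Remark \ref{defn-separators}, which you have simply unpacked in more detail (including the observation that the $1$-dimensional corner case also yields disjoint projections annihilating $\mathcal{S}$).
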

\begin{proof}
Suppose that $A\mathcal{S}B \not=\{0\}$ for every $A,B \in M_n\setminus\{0\}$.
It follows from Remark \ref{defn-separators} that $\mathcal{S}$ does not admit a separator of rank strictly smaller than $n-1$, and therefore $\mathcal{S}$ is $(n-1)$-connected.
Suppose, on the contrary, that there exist $A,B \in M_n\setminus\{0\}$ such that $A\mathcal{S}B =\{0\}$.  Let $v$ be a vector in the range of $B$ and $u$ a vector in the range of $A^\dagger$.  
Then $I_n-\ket{u}\bra{u}-\ket{v}\bra{v}$ is a separator for $\mathcal{S}$ with rank $n-2$, and therefore $\mathcal{S}$ is not $(n-1)$-connected.
\end{proof}
It is not difficult to produce examples of quantum graphs contained in $M_n$ satisfying the condition in the previous Proposition without being all of $M_n$.
An example for $n=2$ is provided at the beginning of \cite[Sec. 4]{weaver2015quantum}, and more generally one can consider
\[
\spa\big\{ I_n, \ket{e_i}\bra{e_j} \mid 1\le i,j \le n, i \not= j \big\} \subsetneq M_n.
\]


\section{Orthogonal representations}
With a definition of  $k$-connectedness that generalizes the classical notion, the next order of business is to find sufficient conditions for a quantum graph to be $k$-connected.  One motivation for such a condition comes from the classical realm in the form of orthogonal representations of graphs (see \cite{Lovasz-Saks-Schrijver}).
Recall that for a classical graph $G=(V,E)$, an \emph{orthogonal representation} is an assignment $f\colon V \to \R^d$ or $f\colon V \to \C^d$ such that for every $i,j \in V$ with $i\not=j$,
\[
i \not\sim j \quad \Rightarrow \quad f(i) \perp f(j).
\]

An orthogonal representation $f$ of $G=(V,E)$  is said to be in \emph{general position} if for any $U \subseteq V$ such that $|U|=d$, the vectors in $\{f(i)\}_{i \in U}$ are linearly independent.
A weaker condition is to require only that the vectors representing the vertices nonadjacent to any fixed vertex are linearly independent.  For brevity, we will say that such a representation is in \emph{locally general position}. 

The relationship between these notions and connectivity is given by Theorem 1.1' in \cite{Lovasz-Saks-Schrijver}:

\begin{theorem}\label{thm-LSS}
If $G$ is a classical graph with $n$ vertices, then the following are equivalent:
\begin{enumerate}[(a)]
  \item $G$ is $(n-d)$-connected.
    \item $G$ has an orthogonal representation in $\R^d$ in general position.
    \item $G$ has an orthogonal representation in $\R^d$ in locally general position.
\end{enumerate}
\end{theorem}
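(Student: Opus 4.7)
The plan is to prove the cycle (b) $\Rightarrow$ (c) $\Rightarrow$ (a) $\Rightarrow$ (b). The first implication is immediate from the definitions, since a subset of a linearly independent set is linearly independent.

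For (c) $\Rightarrow$ (a), I would argue by contraposition. If $G$ is not $(n-d)$-connected, there is a vertex cut $S$ with $|S|<n-d$, yielding a partition $V\setminus S = A \sqcup B$ into nonempty pieces with no edges between them and $|A|+|B|\geq d+1$. The orthogonal representation forces $\spa f(A) \perp \spa f(B)$ in $\R^d$, so their dimensions sum to at most $d$. On the other hand, fixing any $a\in A$, the set $B$ lies among the non-neighbors of $a$, so locally general position gives $\dim \spa f(B) = |B|$; symmetrically $\dim \spa f(A) = |A|$. Adding yields $|A|+|B|\leq d$, contradicting the lower bound.

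For (a) $\Rightarrow$ (b), the more delicate direction, I would construct $f$ iteratively, choosing $f(v_1),\dotsc,f(v_n)$ one at a time while preserving orthogonality of $f(v_j),f(v_k)$ for $j,k\le i$ with $v_j \not\sim v_k$ and general position of the vectors chosen so far. The critical observation is that $(n-d)$-connectedness forces $\delta(G)\geq n-d$, so every vertex has at most $d-1$ non-neighbors; hence at step $i$ the orthogonality constraints confine $f(v_i)$ to a subspace $W_i\subseteq \R^d$ of dimension at least $1$, since the at most $d-1$ previously placed non-neighbor vectors are linearly independent by the inductive general-position hypothesis.

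The main obstacle is the general-position requirement on $f(v_i)$: one must avoid finitely many $(d-1)$-dimensional subspaces $\spa\{f(v_j):j\in S\}$ with $|S|=d-1$, and a priori $W_i$ could lie inside one of them. The cleanest route I see is to work algebraically: the set $X \subseteq (\R^d)^n$ of orthogonal representations is a real algebraic variety cut out by the bilinear equations indexed by non-edges, and general position is a Zariski-open condition, so it suffices to exhibit an irreducible component of $X$ whose generic point is in general position. This reduces the problem to a dimension/Menger-type argument using $(n-d)$-connectedness to show that $X$ has a component of the expected dimension $nd$ minus the number of non-edges, and I expect this dimension estimate to be the technical heart of the proof.
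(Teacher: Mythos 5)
A preliminary remark: the paper does not prove this statement at all --- it is quoted verbatim as Theorem 1.1$'$ of Lov\'asz--Saks--Schrijver and used only as motivation for the quantum definitions that follow --- so there is no in-paper argument to compare yours against; your attempt must stand on its own. Of your three implications, (c) $\Rightarrow$ (a) is correct and is the standard argument: orthogonality across the cut forces $\spa f(A)\perp\spa f(B)$, and locally general position (applied at a vertex of $B$ for $A$ and at a vertex of $A$ for $B$) gives $\dim\spa f(A)+\dim\spa f(B)=|A|+|B|\ge d+1$, a contradiction. Your justification of (b) $\Rightarrow$ (c) is misstated, though the implication is true: general position does \emph{not} say the whole family is linearly independent (impossible for $n>d$), only that every $d$-subset is; you must first observe that under (b) each vertex $v$ has at most $d-1$ non-neighbours (otherwise $d$ of them would be linearly independent vectors lying in the hyperplane $f(v)^{\perp}$), and only then does the $d$-subset condition apply to the non-neighbour set. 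This is a small, repairable slip.

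The genuine gap is (a) $\Rightarrow$ (b), which is the entire content of the theorem. Your greedy scheme correctly uses $\delta(G)\ge n-d$ to keep the admissible subspace $W_i$ nonzero, and you correctly identify the obstruction that kills the naive induction: $W_i$ can be one-dimensional and contained in one of the finitely many bad subspaces spanned by $d-1$ previously placed vectors, so no deterministic greedy choice is guaranteed to work. But the proposed repair --- exhibit an irreducible component of the variety of orthogonal representations whose generic point is in general position, via a dimension count --- is not carried out, and your own phrase ``I expect this dimension estimate to be the technical heart of the proof'' concedes exactly the point: that heart is absent. The known proof instead chooses each $f(v_i)$ uniformly at random in $W_i$ and proves, by a delicate induction that invokes $(n-d)$-connectedness through a reordering/augmenting argument, that every $d$-subset of the resulting vectors is almost surely linearly independent; nothing playing that role appears in your sketch, and it is not clear that the algebraic variety you describe even has a single irreducible component of the ``expected'' dimension without an argument of comparable difficulty. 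As written, the proposal establishes (b) $\Rightarrow$ (c) $\Rightarrow$ (a) but not the converse, i.e., only one direction of the equivalence.
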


Our desire is to find a condition such as (b) or (c) in the above theorem that will imply some amount of connectivity for a quantum graph.  We start by considering what it should mean for a quantum graph to be ``orthogonally represented'', motivated by the concept of order zero maps.  Recall the following definition.

\begin{definition}Let $A,B$ be $C^*$-algebras.
\begin{enumerate}[(a)]
    \item Two elements $a,b\in A$ are called \emph{orthogonal}, denoted $a \perp b$, if $0=ab=ba=a^*b = ab^*$.
    \item A completely positive map $\phi\colon A \to B$ is said to be \emph{order zero} if $\phi(a)\perp \phi(b)$ whenever $a\perp b$.
\end{enumerate}
\end{definition}

Order zero maps are known to have a nice structure, see
\cite[Thm. 1.2]{WZ-nuclear-dimension} and
\cite[Thm. 2.3]{WZ-order-zero}.  In terms of quantum graphs, $\phi\colon M_n\to M_d$ is order zero if and only if $\phi(A)\perp \phi(B)$ whenever $A$ and $B$ are ``nonadjacent'' in the quantum graph $\mathbb{C}\cdot I_n$.  We are led by analogy to the following definition.

\begin{definition}
\label{def:orth}
Let $\mathcal{S} \subseteq M_n$ be a quantum graph. A completely positive map $\phi\colon M_n \to M_d$ is said to be an \emph{orthogonal representation} of $\mathcal{S}$ if $\phi(A) \perp \phi(B)$ for any $A, B \in M_n$  such that
\[
A \mathcal{S} B = B \mathcal{S} A = A^* \mathcal{S} B = A \mathcal{S} B^*=  \{0\}.
\]
\end{definition}

Note that if $\mathcal{S} \subseteq M_n$ is a quantum graph, then the identity map $I_n\colon M_n \to M_n$ is trivially an orthogonal representation of $\mathcal{S}$. 
Definition \ref{def:orth} is justified by the following two propositions. 
\begin{proposition}\label{prop-ortho-rep-is-order-zero}
Let $G=(V,E)$ be a classical graph with $n$ vertices and let $f\colon V \to \C^d$ be an orthogonal representation of $G$. Then the completely positive map $\phi\colon M_n \to M_d$ defined by
\[
\phi(X) = \sum_i  \ket{f(i)}\bra{e_i} X \ket{e_i}\bra{f(i)} \quad \text{ for all } X \in M_n
\] is an orthogonal representation of $\mathcal{S}_G$.
\end{proposition}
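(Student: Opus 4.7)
The plan is to first give $\phi(X)$ an explicit simpler form that makes the argument transparent, and then match the four orthogonality conditions in the definition of orthogonal representation (Definition \ref{def:orth}) to the four factored equations coming from $A\mathcal{S}_G B = B\mathcal{S}_G A = A^*\mathcal{S}_G B = A\mathcal{S}_G B^* = \{0\}$.

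First, observe that $\bra{e_i} X \ket{e_i} = X_{ii}$ is a scalar, so the definition of $\phi$ simplifies to
\[
\phi(X) = \sum_{i=1}^n X_{ii} \ket{f(i)}\bra{f(i)}.
\]
This already shows complete positivity, since $\phi$ has Kraus decomposition with operators $K_i = \ket{f(i)}\bra{e_i}$. It also shows $\phi(X)^* = \phi(X^*)$ directly from the formula.

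Next, I would expand a typical product:
\[
\phi(A)\phi(B) = \sum_{i,j} A_{ii} B_{jj} \braket{f(i)|f(j)} \ket{f(i)}\bra{f(j)}.
\]
The key split is by whether $i$ and $j$ are adjacent (including $i=j$) or not. If $i \neq j$ and $i \not\sim j$ in $G$, then $\braket{f(i)|f(j)}=0$ by the defining property of the orthogonal representation $f$. Conversely, when $i = j$ or $i \sim j$, the scalar coefficient $A_{ii}B_{jj}$ vanishes for the following reason: the matrix $\ket{e_i}\bra{e_j}$ lies in $\mathcal{S}_G$, so from $A\mathcal{S}_G B = \{0\}$ we get $A\ket{e_i}\bra{e_j}B = 0$, and reading off the $(i,j)$ entry of this zero matrix gives $A_{ii} B_{jj} = 0$. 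Either way, every summand vanishes, and so $\phi(A)\phi(B) = 0$.

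The remaining three orthogonality conditions are handled by exactly the same argument, with the four hypotheses lining up one-to-one with the four products: $\phi(B)\phi(A) = 0$ from $B\mathcal{S}_G A = \{0\}$; $\phi(A)^*\phi(B) = \phi(A^*)\phi(B) = 0$ from $A^*\mathcal{S}_G B = \{0\}$; and $\phi(A)\phi(B)^* = \phi(A)\phi(B^*) = 0$ from $A\mathcal{S}_G B^* = \{0\}$. There is no real obstacle here; the only point worth flagging is the observation that $\phi(X)$ depends only on the diagonal of $X$, which at first looks like too much collapse but is exactly what makes the entrywise matching between the Kraus-style sum and the ``diagonal-entry'' readout of $A\ket{e_i}\bra{e_j}B$ work out cleanly.
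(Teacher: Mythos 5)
Your proof is correct and follows essentially the same route as the paper's: expand $\phi(A)\phi(B)$ as a double sum, kill the non-adjacent terms using $\braket{f(i)|f(j)}=0$, and kill the adjacent (and diagonal) terms using $A\ket{e_i}\bra{e_j}B=0$ from $\ket{e_i}\bra{e_j}\in\mathcal{S}_G$. Writing $\bra{e_i}X\ket{e_i}=X_{ii}$ up front is a minor cosmetic simplification, not a different argument.
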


\begin{proof}
Pick any $A,B\in M_n$ such that
\[
A \mathcal{S}_G B = B \mathcal{S}_G A = A^* \mathcal{S}_G B = A \mathcal{S}_G B^*=  \{0\}.
\]
By definition
\[
\phi(A)\phi(B) = \sum_{i,j} \ket{f(i)}\bra{e_i} A \ket{e_i}\braket{f(i)|f(j)}\bra{e_j} B \ket{e_j}\bra{f(j)},
\]
and since $\braket{f(i)|f(j)} = 0$ whenever $i\not=j$ and $i \not\sim j$, this reduces to
\begin{align*}
\phi(A)\phi(B) &= \sum_{\substack{i,j\\ i = j \text{ or } i \sim j}} \ket{f(i)}\bra{e_i} A \ket{e_i}\braket{f(i)|f(j)}\bra{e_j} B \ket{e_j}\bra{f(j)} \\
&= \sum_{\substack{i,j\\ i = j \text{ or } i \sim j}} \braket{f(i)|f(j)} \ket{f(i)}\bra{e_i} A \ket{e_i}\bra{e_j} B \ket{e_j}\bra{f(j)}. 
\end{align*}
But when $i=j$ or $i \sim j$, we have $A\ket{e_i} \bra{e_{j}}B = 0$, and therefore
$\phi(A)\phi(B)=0$.  The same argument shows also that $\phi(B)\phi(A)=\phi(A^*)\phi(B)=\phi(A)\phi(B^*)=0$.  Therefore $\phi$ is an orthogonal representation for $\mathcal{S}_G$.
\end{proof}

\begin{proposition}
Let $G=(V,E)$ be a classical graph with $n$ vertices and let $\phi\colon M_n\to M_d$ be an orthogonal representation of $\mathcal{S}_G$.  For each $i\in [n]$, let $v_i$ be a vector in the range of $\phi(\ket{e_i}\bra{e_i})$.  Then the map $f\colon V\to \mathbb{C}^d$ defined by $f(i)=v_i$ is an orthogonal representation of $G$.
\end{proposition}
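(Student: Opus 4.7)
The plan is to feed into Definition~\ref{def:orth} the pair $A = \ket{e_i}\bra{e_i}$, $B = \ket{e_j}\bra{e_j}$ for each pair of distinct nonadjacent vertices $i, j$. These are the natural choices since $v_i$ and $v_j$ lie in the ranges of $\phi(A)$ and $\phi(B)$ respectively, so operator-level orthogonality of $\phi(A)$ and $\phi(B)$ should translate into the desired $\braket{v_i|v_j} = 0$.

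Fix $i \neq j$ in $[n]$ with $i \not\sim j$. I would first verify that $A = \ket{e_i}\bra{e_i}$ and $B = \ket{e_j}\bra{e_j}$ satisfy the four vanishing conditions in Definition~\ref{def:orth}. Since $A$ and $B$ are self-adjoint, all four collapse to $A \mathcal{S}_G B = \{0\}$ and $B \mathcal{S}_G A = \{0\}$. For a spanning element $\ket{e_k}\bra{e_\ell}\in\mathcal{S}_G$ (with $k = \ell$ or $k \sim \ell$), the product $A \ket{e_k}\bra{e_\ell} B = \braket{e_i | e_k}\braket{e_\ell | e_j}\,\ket{e_i}\bra{e_j}$ is nonzero only when $k = i$ and $\ell = j$; but that would force $i = j$ or $i \sim j$, contradicting the standing assumption. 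Hence $A \mathcal{S}_G B = \{0\}$, and taking adjoints yields $B \mathcal{S}_G A = \{0\}$.

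Applying Definition~\ref{def:orth} now gives $\phi(A) \perp \phi(B)$, and in particular $\phi(A)\phi(B) = 0$. Because $\phi$ is completely positive, $\phi(A)$ and $\phi(B)$ are positive semidefinite and hence self-adjoint; for any such operator $T$ one has $\ker T = (\operatorname{range} T)^\perp$, so $\phi(A)\phi(B) = 0$ forces $\operatorname{range} \phi(B) \subseteq (\operatorname{range} \phi(A))^\perp$. Since by hypothesis $v_i \in \operatorname{range} \phi(A)$ and $v_j \in \operatorname{range} \phi(B)$, I conclude $\braket{v_i | v_j} = 0$, as required.

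The only step that is not a direct unpacking of definitions is the last one, passing from the operator equation $\phi(A)\phi(B) = 0$ to orthogonality of the corresponding ranges. This relies exclusively on self-adjointness of $\phi(A)$ and $\phi(B)$, which is automatic from complete positivity, so there is no real obstacle beyond careful bookkeeping.
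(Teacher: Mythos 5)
Your proposal is correct and follows essentially the same route as the paper's proof: take $A=\ket{e_i}\bra{e_i}$, $B=\ket{e_j}\bra{e_j}$ for nonadjacent $i\neq j$, observe $A\mathcal{S}_G B=\{0\}$, invoke the definition of orthogonal representation to get $\phi(A)\perp\phi(B)$, and deduce orthogonality of $v_i$ and $v_j$ from orthogonality of the ranges. You simply spell out two steps the paper leaves implicit (the verification that all four vanishing conditions hold, and the passage from $\phi(A)\phi(B)=0$ to orthogonality of ranges via self-adjointness of positive operators), both of which are handled correctly.
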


\begin{proof}
Pick any $i,j \in V$ with $i\not=j$ and $i \not\sim j$. Since $\mathcal{S}_G = \spa\{ \ket{e_k}\bra{e_\ell} \mid k \sim \ell \text{ or } k=\ell  \}$, note that $\ket{e_i}\bra{e_i} \mathcal{S}_G \ket{e_j}\bra{e_j} = \{0\}$.  As $\phi$ is an orthogonal representation, this implies $\phi( \ket{e_i}\bra{e_i} ) \perp \phi( \ket{e_j}\bra{e_j} )$ in the $C^*$-algebra $M_d$.  But then Definition \ref{def:orth} implies $f(i) \perp f(j)$ in the Hilbert space $\mathbb{C}^d$, by the way $f$ was defined.  Therefore $f$ is an orthogonal representation of $G$.
\end{proof}

\begin{remark}
The same proof as above also shows that an orthogonal representation of a quantum graph $\mathcal{S}\subseteq M_n$ induces a natural complex-valued orthogonal representation of $C_v(\mathcal{S})$ for each orthonormal basis $v$ of $\C^n$.
\end{remark}

Orthogonal representations of quantum graphs are already present in the quantum information literature.
In fact, essentially the same proof as that of Proposition \ref{prop-ortho-rep-is-order-zero} shows that if  $\phi\colon M_n\to M_d$ is a quantum channel (i.e., a completely positive and trace-preserving map), then $\phi$ is an orthogonal representation for its associated quantum confusability graph $\mathcal{S}_\phi$.
More generally, the notions of quantum (sub-)complexity of a quantum graph $\mathcal{S} \subseteq M_n$ from \cite{Levene-Paulsen-Todorov} involve considering completely positive and trace-preserving maps $\psi\colon M_n \to M_d$ whose associated quantum confusability graphs $\mathcal{S}_\psi$ are contained in $\mathcal{S}$, which by the above means that such $\psi$ are orthogonal representations for $\mathcal{S}$.

We have already observed that projections are analogues to collections of vertices when viewing quantum graphs as analogues of classical graphs.  As was the case for connectedness, this viewpoint leads to a potential candidate for a quantum definition of what it means for an orthogonal representation to be in locally general position.

Suppose $Q$ is a rank one projection in $M_n$.  If viewed as a ``quantum vertex'' of some quantum graph $\mathcal{S}\subseteq M_n$, then we also view another projection $P$ as a ``collection of vertices nonadjacent to $Q$'' if $P\mathcal{S}Q=\{0\}$.  And in this case, $\mathrm{rank}(P)$ is viewed as the ``number of vertices in $P$''.  By analogy to the classical definition, an orthogonal representation $\phi$ for $\mathcal{S}$ should preserve the rank of $P$ if it is to be viewed as being in ``locally general position''. 

\begin{definition}
Let $\mathcal{S} \subseteq M_n$ be a quantum graph, and $\phi\colon M_n \to M_d$ an orthogonal representation of $\mathcal{S}$.
We say that $\phi$ is in \emph{locally general position} if, for any fixed nonzero projection $Q\in M_n$, $\rank(\phi(P)) \ge \rank(P)$ whenever $P\in M_n$ is a projection such that $Q\mathcal{S}P = \{0\}$.
\end{definition}

Observe that in the definition above, it suffices to check the inequality for all rank one projections $Q$.
Our definition is justified by the following proposition.

\begin{proposition}
Let $G=(V,E)$ be a classical graph with $n$ vertices and let $f\colon V \to \C^d$ be an orthogonal representation of $G$ in locally general position.
Let $\phi\colon M_n \to M_d$ be the associated quantum orthogonal representation of $\mathcal{S}_G$, i.e. the mapping $\phi\colon M_n \to M_d$ given by
\[
\phi(X) = \sum_i  \ket{f(i)}\bra{e_i} X \ket{e_i}\bra{f(i)} \quad \text{ for all } X \in M_n.
\]
Then $\phi$ is in locally general position.
\end{proposition}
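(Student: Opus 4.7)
The plan is to verify the definition of locally general position for $\phi$. By the observation immediately following the definition, it suffices to check the inequality $\rank(\phi(P)) \geq \rank(P)$ for rank-one projections $Q = \ket{u}\bra{u}$ and every projection $P \in M_n$ with $Q\mathcal{S}_G P = \{0\}$.

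First I would translate the condition $Q\mathcal{S}_G P = \{0\}$ into combinatorial data about the standard basis expansion of $u$. Since $\mathcal{S}_G$ is spanned by the matrix units $\ket{e_k}\bra{e_\ell}$ with $k = \ell$ or $k \sim \ell$, a direct computation gives $Q\ket{e_k}\bra{e_\ell}P = \braket{u|e_k}\ket{u}\bra{e_\ell}P$. Setting $K = \{k \in [n] : \braket{u|e_k} \neq 0\}$, the condition $Q\mathcal{S}_G P = \{0\}$ becomes equivalent to $P\ket{e_\ell} = 0$ for every $\ell \in \bigcup_{k \in K} N[k]$, where $N[k] = \{k\}\cup\{j : j \sim k\}$ is the closed neighborhood of $k$ in $G$.

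Next I would use the explicit form of $\phi$ to write
\[
\phi(P) = \sum_{i=1}^n \bra{e_i}P\ket{e_i}\,\ket{f(i)}\bra{f(i)} = \sum_{i=1}^n P_{ii}\,\ket{f(i)}\bra{f(i)},
\]
which depends only on the diagonal entries of $P$. Let $S = \{i \in [n] : P_{ii} > 0\}$. Because $P$ is a projection, $P_{ii} = \|P\ket{e_i}\|^2$, so $i \notin S$ if and only if $P\ket{e_i} = 0$; this immediately gives $\rank(P) \leq |S|$, since the range of $P$ is contained in $\spa\{P\ket{e_i} : i \in S\}$. On the other hand, since $\phi(P)$ is positive semidefinite, one checks that $\ker\phi(P) = \spa\{f(i) : i \in S\}^\perp$, so $\rank(\phi(P)) = \dim\spa\{f(i) : i \in S\}$.

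To conclude, I would combine the two steps: the support condition forces $S \subseteq [n]\setminus\bigcup_{k\in K}N[k] \subseteq [n]\setminus N[k_0]$ for any fixed $k_0 \in K$, and the locally general position hypothesis on $f$ says precisely that $\{f(j) : j \in [n]\setminus N[k_0]\}$ is linearly independent. Hence $\{f(i) : i \in S\}$ is linearly independent, which gives $\rank(\phi(P)) = |S| \geq \rank(P)$. The only subtle point is the first step, where $u$ is not assumed to lie along a standard basis vector; but because $\mathcal{S}_G$ has a convenient matrix-unit basis and $\ket{u}\bra{u}\ket{e_k}$ is simply $\braket{u|e_k}\ket{u}$, the reduction to the support $K$ of $u$ is straightforward, after which everything follows from the explicit formula for $\phi$.
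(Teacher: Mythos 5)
Your proof is correct, and it handles the combinatorial bookkeeping by a genuinely different (and more elementary) route than the paper. The paper's proof introduces an auxiliary orthonormal basis $(\ket{v_j})$ aligned with $P$ and invokes the permutation lemma of \cite[Lemma 13]{Kim-Mehta} to arrange $\braket{e_j|v_j}\neq 0$, so that the index set $J$ with $P=\sum_{j\in J}\ket{v_j}\bra{v_j}$ has size exactly $\rank(P)$, satisfies $\bra{e_j}P\ket{e_j}\neq 0$ for $j\in J$, and consists of vertices nonadjacent to a single index $i_0$ in the support of $u$; the kernel computation then runs over $J$. You instead work entirely in the standard basis: you note that $\phi(P)=\sum_i P_{ii}\,\ket{f(i)}\bra{f(i)}$ sees only the diagonal of $P$, replace $J$ by the diagonal support $S=\{i:P_{ii}>0\}$, and use the two elementary facts that $\rank(P)\le |S|$ for any projection (since $P_{ii}=\|P\ket{e_i}\|^2$) and that the hypothesis $Q\mathcal{S}_G P=\{0\}$ forces $S$ to avoid the closed neighborhood of every $k$ with $\braket{u|e_k}\neq 0$. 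This dispenses with the aligned basis and the permutation lemma entirely, at the harmless cost of getting an inequality $|S|\ge\rank(P)$ rather than an exact count; since the inequality points the right way, local general position of $f$ applied to any fixed $k_0$ in the support of $u$ still yields $\rank(\phi(P))=\dim\spa\{f(i):i\in S\}=|S|\ge\rank(P)$. Both arguments share the same core step, namely identifying $\ker(\phi(P))$ via positivity as the orthogonal complement of the span of the relevant $f(i)$'s; yours is the more self-contained of the two, while the paper's has the minor aesthetic advantage of producing an index set of size exactly $\rank(P)$.
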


\begin{proof}
Fix a rank one projection $Q\in M_n$ and suppose $P\in M_n$ is a projection such that $Q\mathcal{S}_GP = \{0\}$.
Let $\ket{u} \in \C^n$ be a unit vector such that $Q = \ket{u}\bra{u}$, and observe that $\ket{u}$ is orthogonal to the range of $P$.
Let $v=\{\ket{v_j}\}_{j=1}^n$ be an orthonormal basis of $\C^n$ aligned with $P$.
By permuting the basis if necessary, we can assume that $\braket{e_i|v_i} \not=0$ for each $1\le i \le n$  \cite[Lemma 13]{Kim-Mehta}, \cite[Lemma 7.28]{paulsen2016entanglement}.  Let $i_0\in[n]$ be such that $\braket{u|e_{i_0}} \not=0$.

Let $J \subseteq [n]$ be such that $P = \sum_{j\in J} \ket{v_j}\bra{v_j}$, noting that $|J| = \rank(P)$, and for each $j \in J$, $\bra{e_j} P \ket{e_j} \not= 0$, so that in particular $\bra{e_j} P\neq 0$.
Observe that for each $j \in J$, we must have $i_0 \not\sim j$ and $i_0 \not=j$,
since otherwise we would have
\[
Q \ket{e_{i_0}}\bra{e_j}P = \braket{u|e_{i_0}} \ket{u} \bra{e_j}P \not=0,
\]
a contradiction.

Now, for any vector $\ket{x} \in \C^d$,
\begin{align*}
\phi(P)\ket{x} = 0 &\Rightarrow \bra{x}\phi(P)\ket{x} = 0\\
&\Rightarrow
\sum_i \braket{x|f(i)}\bra{e_i} P \ket{e_i}\braket{f(i)|x} = 0 \\
&\Rightarrow \sum_i |\braket{x|f(i)}|^2 \bra{e_i} P \ket{e_i} = 0 \\
&\Rightarrow \braket{x|f(i)} \cdot \bra{e_i} P \ket{e_i} = 0 \mbox{ for every }i\in [n]\\
&\Rightarrow \braket{x|f(j)}=0 \mbox{ for every }j\in J.
\end{align*}
That is,
\[
\ker\big( \phi(P) \big) \subseteq \big( \spa\{ f(j) \mid j \in J \} \big)^\perp.
\]
Because $f$ is in locally general position, and the indices in $J$ correspond to vertices in $G$ which are not adjacent to $i_0$, the dimension of $\spa\{ f(j) \mid j \in J \}$ is exactly $|J|$ and $|J|\leq d$. Therefore
\[
\dim\big( \ker\big( \phi(P) \big) \big) \le d -\rank(P),
\]
from where
\[
\rank\big( \phi(P) \big) \ge \rank(P).
\]
That is, $\phi$ is in locally general position.
\end{proof}

Finally, we arrive at the main result of this section, which shows that some connectivity of a quantum graph can be inferred from the existence of an orthogonal representation in locally general position, in analogy to the classical result.

\begin{proposition}
Let $\mathcal{S} \subseteq M_n$ be a quantum graph, and suppose there exists an orthogonal representation $\phi : M_n \to M_d$ of $\mathcal{S}$ in locally general position. Then $\mathcal{S}$ is $(n-d)$-connected.
\end{proposition}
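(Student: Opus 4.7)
The plan is to take an arbitrary separator $P$ of $\mathcal{S}$ and bound its rank from below by $n-d$, which by definition of $(n-d)$-connectedness is exactly what needs to be shown. First I would dispose of the trivial case in which $(I_n-P)\mathcal{S}(I_n-P)$ is $1$-dimensional: here $\rank(I_n-P) \le 1$ forces $\rank(P) \ge n-1 \ge n-d$ (since we may assume $d\ge 1$, otherwise the target space $M_d$ is degenerate). So the substantive case is when $(I_n-P)\mathcal{S}(I_n-P)$ is disconnected as an operator system in $M_{n-\rank(P)}$.

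Assuming that, I would invoke the characterization recorded in Remark \ref{defn-separators} to produce nontrivial projections $Q_1,Q_2 \in M_n$, pairwise orthogonal to each other and to $P$, with $Q_1+Q_2=I_n-P$ and $Q_1\mathcal{S}Q_2 = \{0\}$. Taking adjoints, $Q_2\mathcal{S}Q_1 = \{0\}$ as well, and because $Q_1,Q_2$ are self-adjoint the four conditions of Definition \ref{def:orth} collapse to these two. Consequently $\phi(Q_1)\perp \phi(Q_2)$ in $M_d$; in particular $\phi(Q_1)\phi(Q_2)=0$. Since $\phi$ is completely positive and $Q_1,Q_2$ are positive semidefinite, $\phi(Q_1)$ and $\phi(Q_2)$ are themselves positive semidefinite, and a positive operator annihilates another positive operator iff their ranges are orthogonal; thus the ranges of $\phi(Q_1)$ and $\phi(Q_2)$ are mutually orthogonal subspaces of $\mathbb{C}^d$, giving
\[
\rank(\phi(Q_1)) + \rank(\phi(Q_2)) \le d.
\]

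Next I would extract a rank bound from the locally general position hypothesis. Pick any rank-one subprojection $Q \le Q_1$; for every $A \in \mathcal{S}$ we have $QAQ_2 = QQ_1AQ_2 = 0$, so $Q\mathcal{S}Q_2 = \{0\}$. The locally general position condition with this $Q$ then yields $\rank(\phi(Q_2)) \ge \rank(Q_2)$, and symmetrically (using a rank-one subprojection of $Q_2$) $\rank(\phi(Q_1)) \ge \rank(Q_1)$. Combining with the previous display gives $\rank(Q_1)+\rank(Q_2) \le d$, that is, $\rank(I_n-P)\le d$, so $\rank(P)\ge n-d$, completing the proof. The only place that requires real care is checking that the locally general position hypothesis applies, namely verifying the implication $Q\le Q_1 \Rightarrow Q\mathcal{S}Q_2 = \{0\}$; everything else is a direct bookkeeping exercise combining the orthogonality of $\phi(Q_1),\phi(Q_2)$ as positive operators with the rank inequalities.
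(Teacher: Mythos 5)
Your argument is essentially the paper's: extract $Q_1,Q_2$ with $Q_1+Q_2=I_n-P$ and $Q_1\mathcal{S}Q_2=\{0\}$, use the orthogonal-representation property to get $\phi(Q_1)\perp\phi(Q_2)$ and hence $\rank(\phi(Q_1))+\rank(\phi(Q_2))\le d$, then use locally general position to get $\rank(\phi(Q_j))\ge\rank(Q_j)$ and conclude $\rank(P)\ge n-d$. You actually supply more justification than the paper does at two points (the collapse of the four orthogonality conditions of Definition \ref{def:orth} to $Q_1\mathcal{S}Q_2=\{0\}$ via self-adjointness, and the range-orthogonality argument for positive operators), and your use of a rank-one subprojection $Q\le Q_1$ to trigger the locally-general-position hypothesis is fine, though taking $Q=Q_1$ itself already works since the definition quantifies over all nonzero projections $Q$.

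The one flaw is your handling of the degenerate case. It is not true that $(I_n-P)\mathcal{S}(I_n-P)$ being $1$-dimensional forces $\rank(I_n-P)\le 1$: for $\mathcal{S}=\C I_n$ and $P=0$ the corner is $\C I_n$, which is $1$-dimensional while $\rank(I_n-P)=n$. So as written, a separator whose corner is $1$-dimensional but of rank at least $2$ would be routed through your first case, where you would assert the false conclusion $\rank(P)\ge n-1$. The repair is immediate: if $\rank(I_n-P)\ge 2$ and the corner is $1$-dimensional, then the corner equals $\C(I_n-P)\cong \C I_r$ with $r\ge 2$, which is disconnected, so such a $P$ falls under your substantive case anyway. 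The clean dichotomy (and the one the paper uses) is $\rank(P)\ge n-1$ versus $\rank(P)<n-1$; in the latter case Remark \ref{defn-separators} applies verbatim and the rest of your argument goes through unchanged.
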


\begin{proof}
If every separator of $\mathcal{S}$ has rank greater than or equal to $n-1$, then $\mathcal{S}$ is $(n-1)$-connected and so also $(n-d)$-connected.  So suppose there is a separator $P$ for $\mathcal{S}$ such that $\mathrm{rank}(P)<n-1$, so that
there exist nontrivial disjoint projections $Q_1, Q_2$, also disjoint from $P$, such that $I_n = P + Q_1 + Q_2$ and $Q_1\mathcal{S}Q_2 = \{0\}$.
Since $\phi$ is an orthogonal representation of $\mathcal{S}$,
 $\phi(Q_1)\perp \phi(Q_2)$ in the $C^*$-algebra $M_d$, and thus
\[
d \ge \rank(\phi(Q_1)) + \rank(\phi(Q_2)).
\]
And since $\phi$ is in locally general position, it follows
that $\rank(\phi(Q_j)) \ge \rank(Q_j)$, so
\[
d \ge \rank(Q_1) + \rank(Q_2) = n - \rank(P),
\]
and so $\rank(P) \ge n-d$.  Therefore $\mathcal{S}$ is $(n-d)$-connected.
\end{proof}

It would be really interesting to know whether the opposite implication holds, that is, whether a certain amount of connectivity implies the existence of an orthogonal representation in locally general position of the appropriate size.
We point out that this does hold in the case of maximal connectivity:
 If $\mathcal{S} \subseteq M_n$ is $(n-1)$-connected, it follows from Proposition \ref{prop-maximal-connectivity} that the trace $\tr\colon M_n \to M_1=\C$ is an orthogonal representation in locally general position for $\mathcal{S}$ (because the required conditions are vacuously satisfied).



\bibliography{references}
\bibliographystyle{amsalpha}

\end{document}